\documentclass[12pt,reqno]{amsart}
 \usepackage{amssymb}
 \usepackage{graphicx}

\newtheorem{theorem}{Theorem}

\theoremstyle{remark}
\newtheorem{remark}{Remark}
\newtheorem{definition}[theorem]{Definition}

\def\BBZ {{\mathbb Z}}

\def\BBR {{\mathbb R}}
\def\BBC {{\mathbb C}}

\begin{document}

\title[Removability of H\"older graphs]
{Removability of H\"older graphs for continuous Sobolev functions}

\author[N. Tecu]{Nicolae Tecu}
\email[N. Tecu]{nicolae.tecu@yale.edu}
\address[N. Tecu]%
{Department of Mathematics \\
 Yale University \\
 New Haven, CT 06520 \\
 USA}

\date{June 2010}

\begin{abstract}
We characterize the removability of H\"older-$\alpha$ graphs with respect to continuous Sobolev $W^{1,2}$ functions. For $\alpha > 2/3$ these graphs are removable, while for $\alpha <2/3$ there exist graphs which are not removable.
\end{abstract}

\maketitle

\section{Introduction}

In this paper we characterize the removability of H\"older graphs
with respect to continuous Sobolev functions. The problem of finding
necessary and sufficient metric conditions for removability has been
studied in previously, most of the time in conjunction with the
closely related removability of sets for quasiconformal mappings. In
this paper we address the first problem, while making reference to
the second one. However, as one will see, the characterization we
provide is not yet complete for the class of quasiconformal
mappings.

The setting of all the theorems is $\BBR^2$. $\Omega$ is an open
set. In the following $W^{1,p}(\Omega)$ will denote the functions in
$L^p(\Omega)$ whose distributional partial derivatives are also
functions in $L^p(\Omega)$. This means that $u\in W^{1,p}(\Omega)$
if $u\in L^{p}(\Omega)$ and there are functions $\partial_j u\in
L^{p}(\Omega),j=1,2$ such that
\begin{equation}
  \label{eq:sobnt}
  \int_{\Omega}u\partial_j\psi dx = -\int_{\Omega}\psi\partial_j u
  dx
\end{equation}
for all test functions $\psi\in C_0^\infty(\Omega)$ and all $1\leq
j\leq n$.

\begin{definition}
  A compact set $K$ is called $W^{1,p}$-removable for continuous functions in $\BBR^2$
 , if any function, continuous in $\BBR^2$ and in $W^{1,p}(\BBR^2\setminus
  K)$ belongs to $W^{1,p}(\BBR^2)$.
\end{definition}

A function $f:\Omega \rightarrow \tilde{\Omega}$ is a quasiconformal
mapping if $f$ is a homeomorphism, it is absolutely continuous on
almost every line parallel to the coordinate axes (ACL),
differentiable almost everywhere and $\mbox{max}_{\alpha}
\partial_\alpha f(x) \leq C \mbox{min}_{\alpha} \partial_\alpha
f(x)$ holds almost everywhere. The min and max are over directions
$\alpha$.

\begin{definition} A compact set $K\subset U$ is quasiconformally
removable in $\BBR^2$, if any homeomorphism of $\BBR^2$ which is
quasiconformal on $\BBR^2\setminus K$ is quasiconformal on $\BBR^2$.
\end{definition}

A quick look over the definitions reveals that removability is about
the absolute continuity on almost every line and that if a set is
$W^{1,2}$-removable for continuous functions then it is also
removable for quasiconformal mappings. It is not known whether the
converse holds also. Obviously, a set which is not removable for
quasiconformal mappings will also be non-removable for continuous
$W^{1,2}$ functions.

The quasiconformal removability problem has been extensively
studied. F. Gehring proved that any set $K$ of $\sigma$-finite
length is quasiconformally removable (\cite{G}). It is not difficult
to see that any set $K$ of positive area is non-removable. In terms
of Hausdorff measure this is the most one can say.

Some of the most significant results have been proven by P. W. Jones and
S. Smirnov (\cite{JS00}) which give sufficient geometric conditions for
continuous Sobolev removability. These have been improved by P. Koskela
and T. Nieminen in \cite{KN05}.
On the non-removability side, R. Kaufman proved that there are non-removable
graphs (\cite{K84}) and in a further paper on the topic he showed that for each
$\alpha < 1/2$ there is a non-quasiconformally removable H\"older $\alpha$ graph (\cite{K86}).

By a {\it H\"older $\alpha$ graph} we mean the graph of a function $f:[0,1]\rightarrow \BBR$
which satisfies the condition $|f(x)-f(y)|\leq C |x-y|^{\alpha}$ for all $x,y\in [0,1]$.
More generally, by a {\it H\"older h graph} we mean the graph of a function
$f:[0,1]\rightarrow \BBR$ which satisfies the condition $|f(x)-f(y)|\leq h(|x-y|)$
for all $x,y\in [0,1]$, where $h:\BBR_+\rightarrow \BBR_+$ is a homeomorphism with $h(0)=0$.
Other aspects of the two problems have been studied by P. Koskela (\cite{Ko99}), R. Kaufman and
J-M. Wu (\cite{KW96}), J-M. Wu (\cite{W98}), 
C. Bishop (\cite{B94}) (the list is not exhaustive).

In this paper we prove the following theorems.

\begin{theorem}\label{theorem1} Let $p>1, p'$ it's conjugate. The graph of a Hoelder h function is removable for continuous Sobolev $W^{1,p}$ functions if the following condition holds:
\begin{equation}
\int_0^1 \left(\frac{t}{h^{-1}(t)}\right)^{p'} dt < \infty
\end{equation}
In particular the condition $\alpha > \frac{p}{2p-1}$ is sufficient for continuous Sobolev $W^{1,p}$ removability. For $p=2$ we get $\alpha > 2/3$ as sufficient condition (hence also for quasiconformal mappings).
\end{theorem}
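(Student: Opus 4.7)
The plan is to verify, for every test function $\psi\in C_c^\infty(\R^2)$ and every $j\in\{1,2\}$, the distributional identity
\[
\int u\,\partial_j\psi\,dx=-\int(\partial_j u)\,\psi\,dx,
\]
where $\partial_j u$ denotes the classical derivative on $\R^2\setminus K$ extended by zero to $K$; combined with $\partial_j u\in L^p(\R^2)$, this forces $u\in W^{1,p}(\R^2)$. I will cut off $u$ near $K$ with a Lipschitz function $\chi_t$ adapted to the graph, apply the $W^{1,p}(\R^2\setminus K)$ identity to $\chi_t\psi$, and show that the error $\int u\psi\,\partial_j\chi_t$ vanishes along a suitable sequence $t_n\to 0^+$.

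The cutoff will be $\chi_t(x,y)=\phi\bigl((y-\tilde f_r(x))/t\bigr)$, where $r=h^{-1}(t)$, $\tilde f_r$ is the mollification at scale $r/10$ of the step function $f_r(x)=f(\lfloor x/r\rfloor r)$, and $\phi\in C^\infty(\R)$ satisfies $\phi\equiv 0$ on $[-3,3]$ and $\phi\equiv 1$ outside $[-6,6]$. The Hoelder bound gives $\|\tilde f_r-f\|_\infty\lesssim t$ and $\|\tilde f_r'\|_\infty\lesssim t/r$, so $\chi_t$ vanishes in a neighborhood of $K$, tends to $1$ almost everywhere, and enjoys the identity $\partial_1\chi_t=-\tilde f_r'(x)\,\partial_2\chi_t$. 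A routine dominated-convergence argument then reduces the theorem to showing $\int u\psi\,\partial_j\chi_t\to 0$ for $j=1,2$.

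The case $j=2$ uses only continuity of $u$: substituting $s=(y-\tilde f_r(x))/t$ and exploiting $\int\phi'=0$ rewrites the integral as $\int dx\int\phi'(s)\bigl[(u\psi)(x,\tilde f_r+ts)-(u\psi)(x,\tilde f_r)\bigr]ds$, which is controlled by a constant times the modulus of continuity of $u\psi$ at scale $t$ on the compact support of $\psi$, and hence tends to $0$.

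The case $j=1$ is the main obstacle and is exactly where the integrability hypothesis enters. The identity $\partial_1\chi_t=-\tilde f_r'\,\partial_2\chi_t$ and the same substitution yield
\[
\Bigl|\int u\psi\,\partial_1\chi_t\Bigr|\lesssim\int|\tilde f_r'(x)|\,\sup_{|s|\leq 6}|(u\psi)(x,\tilde f_r+ts)-(u\psi)(x,\tilde f_r)|\,dx.
\]
For a.e.\ $x$ the vertical slice $u(x,\cdot)$ is absolutely continuous on $\R$, since it belongs to $W^{1,p}$ on each side of the single crossing $f(x)$ by Fubini and continuity of $u$ bridges that point. I will bound the inner supremum by $(6t)^{1/p'}\bigl(\int_{|\tau-\tilde f_r(x)|\leq 6t}|\partial_2(u\psi)|^p\,d\tau\bigr)^{1/p}$ via absolute continuity and Hoelder, and a second application of Hoelder in $x$ together with the direct computation $\|\tilde f_r'\|_{L^{p'}(\R)}\lesssim t/h^{-1}(t)$ produces
\[
\Bigl|\int u\psi\,\partial_1\chi_t\Bigr|\lesssim\frac{t^{1+1/p'}}{h^{-1}(t)}\,\|\partial_2(u\psi)\|_{L^p(A_t)},
\]
where $A_t$ is a $6t$-neighborhood of $K$ of area $\lesssim t$. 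Since the $p'$-th power of the prefactor is $t\bigl(t/h^{-1}(t)\bigr)^{p'}$, the hypothesis $\int_0^1(t/h^{-1}(t))^{p'}dt<\infty$ implies $\liminf_{t\to 0}t(t/h^{-1}(t))^{p'}=0$, so I can select a sequence $t_n\to 0$ along which this prefactor tends to zero, while $\|\partial_2(u\psi)\|_{L^p(A_{t_n})}\to 0$ from absolute continuity of the Lebesgue integral. The heart of the argument is the geometric match between the cutoff scales (vertical $t$, horizontal $h^{-1}(t)$) and the Hoelder oscillation of $f$, so that the vertical-slice Sobolev inequality for $u$ absorbs exactly the factor $\|\tilde f_r'\|_{L^{p'}}$ that the hypothesis is designed to control.
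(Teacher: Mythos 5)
Your argument is correct, but it takes a genuinely different route from the paper. The paper's proof is a three-line application of the Jones--Smirnov criterion (Theorem \ref{JonesSmirnov}): a Whitney square of the complement at height $\approx 2^{-n}$ above or below the graph has side $\approx h^{-1}(2^{-n})$ and shadow $\approx 2^{-n}$, and after counting squares the Whitney sum $\sum_Q (s(Q)/l(Q))^{p'}l(Q)^2$ collapses to $\sum_n 2^{-n}\bigl(2^{-n}/h^{-1}(2^{-n})\bigr)^{p'}\approx\int_0^1(t/h^{-1}(t))^{p'}\,dt$. You instead verify the distributional identity directly with a cutoff adapted to the graph; this is self-contained, avoids the hyperbolic-geodesic and shadow machinery entirely, and makes transparent exactly where each hypothesis is used: continuity of $u$ glues the two one-sided absolutely continuous vertical slices across the single crossing point $f(x)$ (for a.e.\ $x$ the slice is locally AC on each ray with derivative $\partial_2 u(x,\cdot)\in L^p(\R)$ by Fubini, so both one-sided limits at $f(x)$ exist and global continuity identifies them with $u(x,f(x))$), while the integrability hypothesis kills the error term $\int u\psi\,\partial_1\chi_t$. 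It is a pleasant consistency check that your per-scale error, whose $p'$-th power is $t\,(t/h^{-1}(t))^{p'}$, is exactly the dyadic block of the paper's Whitney sum at scale $t=2^{-n}$; note also that your proof only requires $\liminf_{t\to 0}t\,(t/h^{-1}(t))^{p'}=0$, which is implied by but strictly weaker than the integral condition for general $h$ (for $h(s)=Cs^\alpha$ both give the same threshold $\alpha>p/(2p-1)$), so your method in fact yields a marginally stronger statement. Two details worth spelling out in a full write-up, neither of which is a gap: the constant in $\|\tilde f_r-f\|_\infty\le 2t$ (one $t$ from the step approximation, one from mollifying across at most one step) must be checked against the plateau $[-3,3]$ of $\phi$ so that $\chi_t$ genuinely vanishes near the graph; and the admissibility of the Lipschitz (in fact smooth) compactly supported test function $\chi_t\psi$ in the weak-derivative identity on $\R^2\setminus K$ should be justified by the usual mollification remark.
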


\begin{theorem}\label{theorem2}
 Let $p>1$. Then for each $\alpha < \frac{p}{2p-1}$ there is a H\"older $\alpha$ graph $\Gamma$ which is not removable for continuous $W^{1,p}$ functions. In particular if $p=2$ we get $\alpha <2/3$.
\end{theorem}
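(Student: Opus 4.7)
The proof is by explicit construction in the spirit of Kaufman \cite{K84, K86}. Given $\alpha<p/(2p-1)$, we seek a self-similar H\"older-$\alpha$ graph $\Gamma$ together with a continuous $u:\BBR^2\to\BBR$ that lies in $W^{1,p}(\BBR^2\setminus\Gamma)$ but not in $W^{1,p}(\BBR^2)$.

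\emph{Step 1 (the graph).} Fix $\beta\in(\alpha,p/(2p-1))$ and integers $N,M$ with $M\approx N^\beta$. Let $\phi_0:[0,1]\to\BBR$ be a zig-zag consisting of $2N$ affine pieces of vertical amplitude $\sim 1/M$. Iteratively define $\phi_{n+1}$ by replacing each affine piece of $\phi_n$ with an affinely rescaled copy of $\phi_0$. The uniform limit $f:=\lim_n\phi_n$ is H\"older of exponent $\log M/\log N\geq\beta>\alpha$, so $\Gamma:=\operatorname{graph}(f)$ is a H\"older-$\alpha$ graph, and it carries a natural hierarchical decomposition into $(2N)^n$ level-$n$ ``cells'', each corresponding to one affine piece of $\phi_n$ of horizontal extent $\sim N^{-n}$ and vertical extent $\sim M^{-n}$.

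\emph{Step 2 (the function).} Build $u=\lim_n u_n$ as follows: on each level-$n$ cell $Q$, prescribe that $u_n$ differ by a chosen amount $a_n$ across $\Gamma_n\cap Q$ (where $\Gamma_n:=\operatorname{graph}(\phi_n)$), smoothed over a collar of vertical thickness $\delta_n\sim M^{-n}$ about $\Gamma_n\cap Q$. Nest the prescriptions coherently across levels, with $a_n$ decaying fast enough that $\sum_n\|u_n-u_{n-1}\|_\infty<\infty$. The limit $u$ is then continuous on all of $\BBR^2$.

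\emph{Step 3 ($W^{1,p}$ off $\Gamma$).} Inside each level-$n$ cell, $|\nabla u|\sim a_n/\delta_n$ on a region of area $\sim N^{-n}\delta_n$. Summing over the $(2N)^n$ level-$n$ cells and then over $n$ gives a geometric-type series in $N,M,a_n,\delta_n$ bounding $\|\nabla u\|_{L^p(\BBR^2\setminus\Gamma)}^p$. The strict inequality $\beta<p/(2p-1)$ is exactly what permits a choice of $a_n$ that makes this series summable without forcing $a_n$ to decay so fast that step 4 fails.

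\emph{Step 4 (failure of $W^{1,p}$ globally).} For a.e.\ horizontal line $\{y=c\}$, the intersection $\Gamma\cap\{y=c\}$ is a Cantor-type set, and by construction $u\restriction\{y=c\}$ accumulates nontrivial variation on it in generalized Cantor-staircase fashion: continuous but not absolutely continuous, with distributional derivative carrying a nontrivial singular part. By the ACL characterization of Sobolev functions on the plane, this precludes $u\in W^{1,p}(\BBR^2)$.

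\emph{Main obstacle.} The heart of the argument is the simultaneous balancing of three competing constraints: the H\"older exponent of $\Gamma$ (step 1), the $L^p$-summability of $\nabla u$ off $\Gamma$ (step 3), and the nontriviality of the accumulated jumps on horizontal level sets (step 4). These three constraints are jointly satisfiable precisely at the threshold $\alpha<p/(2p-1)$, matching the sharpness predicted by Theorem~\ref{theorem1}; any slower decay of $a_n$ violates step 3, and any faster decay kills the singular mass required in step 4.
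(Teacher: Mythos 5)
There is a genuine gap, and it sits at the heart of the argument: your Step 4 is asserted rather than proved, and for the construction described in Steps 2--3 it actually fails. Take the natural reading of Step 2: $u_n-u_{n-1}$ is a smoothed jump of height $a_n$ across $\Gamma_n$, spread over a vertical collar of thickness $\delta_n\sim M^{-n}$. On a horizontal line $\{y=c\}$ the function $x\mapsto (u_n-u_{n-1})(x,c)$ then has total variation comparable to $a_n$ times the number of level-$n$ affine pieces of $\phi_n$ whose vertical range meets $c$, which is of order $(2N/M)^n$. On the other hand, the gradient of $u_n-u_{n-1}$ in the collar has an $x$-component of size $a_n|\phi_n'|/\delta_n\sim a_n(2N/M)^nM^n$ (your Step 3 records only the $y$-component $a_n/\delta_n$, but since $M\approx N^\beta<N$ the $x$-component dominates), supported on a set of area $\sim\delta_n$. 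So the summability you need in Step 3 forces $a_n(2N)^nM^{-n/p}$ to be summable, hence $\sum_n a_n(2N/M)^n=\sum_n a_n(2N)^nM^{-n/p}\cdot M^{-n/p'}<\infty$. But then the derivatives of the Lipschitz partial sums $u_n(\cdot,c)$ converge in $L^1$ of the line, so $u(\cdot,c)$ is the integral of its pointwise derivative and is absolutely continuous on \emph{every} horizontal line; equivalently, $\nabla u_n$ is Cauchy in $L^p(\BBR^2)$ (note $L^p(\BBR^2\setminus\Gamma)=L^p(\BBR^2)$ since $\Gamma$ has measure zero), which together with uniform convergence places $u$ in $W^{1,p}(\BBR^2)$ --- the opposite of what is needed. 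The three constraints in your ``main obstacle'' paragraph are not jointly satisfiable for this family of functions at any $\alpha$: a vertical jump across a connected graph cannot survive the passage to a continuous limit, and once it is smoothed and made $L^p$, no singular part remains.

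The paper escapes this trap by a structurally different choice of $u$, and this is the idea you are missing. One sets $u_n(x,y)=\int_0^xA_n(t,y)\,dt$, where $A_n\ge0$ is supported on the $n$-th generation $\Gamma_n$ of rectangles and is renormalized at each step so that the mass on every horizontal line is conserved: $\int_0^1A_{n+1}(t,y)\,dt=\int_0^1A_n(t,y)\,dt=m(y)>0$. Then $\partial u/\partial x\equiv0$ off $\Gamma$, so the $x$-derivative costs nothing in $L^p(\BBR^2\setminus\Gamma)$ and the entire $L^p$ budget is spent on $\partial u/\partial y$, which is controlled by a recursion for $b_n=\sup|\partial u_n/\partial y|$; yet on each horizontal line $u(\cdot,c)$ is nondecreasing with total increase $m(c)>0$ concentrated on the measure-zero set $\Gamma\cap\{y=c\}$ --- a Cantor staircase, manifestly not absolutely continuous. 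Equivalently, $\partial_xu_n=A_n$ has constant positive $L^1$ norm and converges weak-$\ast$ to a nontrivial singular measure on $\Gamma$; this singular part of the distributional derivative is exactly what your construction cannot produce. Your Step 1 and the closing parameter arithmetic (with $N=2^b$, $M=2^a$, the $L^p$ condition $2N^{p-1}<M^p$ played against the H\"older condition) are close to the paper's; what must be added is the mass-preserving, monotone-in-$x$ design of $u$.
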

We do not know what happens for $\alpha = \frac{p}{2p-1}$.
\begin{theorem} \label{theorem3}
For each $\alpha<1/2$ there is a graph which is non removable for quasiconformal mappings.
\end{theorem}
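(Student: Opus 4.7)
The plan is to construct, for each $\alpha<1/2$, a H\"older-$\alpha$ function $f:[0,1]\to\BBR$ and a homeomorphism $F$ of $\BBR^2$ which is $K$-quasiconformal on the complement of the graph $\Gamma=\{(x,f(x)):x\in[0,1]\}$ (for some $K=K(\alpha)<\infty$) but such that $F(\Gamma)$ has positive planar Lebesgue measure. Since any genuine quasiconformal homeomorphism of $\BBR^2$ sends Lebesgue null sets to Lebesgue null sets and $\Gamma$ is a null set, this immediately forces $F$ not to be quasiconformal globally, exhibiting $\Gamma$ as a non-removable set.

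First I would build $\Gamma$ by a Cantor-type inductive scheme in the spirit of the construction underlying Theorem~\ref{theorem2}. Starting from $f_0$ linear on $[0,1]$, at stage $n$ I have a piecewise linear function $f_n$ whose graph consists of $\simeq 2^n$ segments of horizontal length $\ell_n\simeq 2^{-n}$; I then pass to $f_{n+1}$ by replacing each segment with a piecewise linear zig-zag of vertical amplitude $\epsilon_n\simeq \ell_n^{\alpha}$. The pointwise limit $f=\lim f_n$ is H\"older-$\alpha$, and consecutive generations partition a thin strip around $\Gamma$ into quadrilateral ``cells'' of width $\simeq \ell_n$ and height $\simeq \epsilon_n$, occurring both just above and just below the stage-$n$ graph.

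Next I would define $F=\lim_{n\to\infty}F_n\circ\cdots\circ F_1$, where each $F_k$ is supported in a thin slab around the stage-$k$ graph, equals the identity outside that slab, and on each cell is a piecewise affine homeomorphism that moves matter across the graph by a definite fraction of $\epsilon_k$ in the vertical direction. The slabs will be chosen so that the supports at scale $k+1$ are nested inside the cells of scale $k$, which ensures that the maps at different scales interact cleanly and that the composition is $K$-quasiconformal on $\BBR^2\setminus\Gamma$ for some $K$ depending only on $\alpha$. Tracking the accumulated vertical displacement at $\Gamma$ across all scales, one arranges that $F(\Gamma)$ contains a set of positive area bounded below by a quantity depending only on $\alpha$.

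The main obstacle is the simultaneous control of the dilatation $K$ and of the accumulated vertical displacement. For the affine piece on a cell of aspect ratio $\ell_n/\epsilon_n\simeq \ell_n^{1-\alpha}$ to move points by a nontrivial fraction of $\epsilon_n$ with bounded dilatation, the horizontal dimension of the cell must dominate the vertical one by a factor that is not too small. A direct calculation of the affine dilatation of such a shear shows that this is possible precisely in the range $1-\alpha>\alpha$, i.e.\ $\alpha<1/2$; the borderline $\alpha=1/2$ is exactly where the cell aspect ratios and the required stretching are in critical balance, consistent with the fact that $1/2$-H\"older graphs (such as Brownian paths) lie on the threshold for quasiconformal removability.
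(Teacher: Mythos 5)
Your overall logical frame is sound (a homeomorphism quasiconformal off $\Gamma$ that maps the null set $\Gamma$ onto a set of positive area cannot be globally quasiconformal), but the two key steps of the construction have genuine gaps, and the second one hides the real difficulty. First, the dilatation estimate: you propose to move points \emph{vertically} by a definite fraction of $\epsilon_k\simeq\ell_k^{\alpha}$ inside cells that follow a graph oscillating with horizontal period $\ell_k$ and vertical amplitude $\epsilon_k$. Since points just above and just below the graph must be displaced differently, the vertical displacement field necessarily varies by $\simeq\epsilon_k$ over horizontal distances $\simeq\ell_k$, so the off-diagonal derivative is of order $\epsilon_k/\ell_k\simeq\ell_k^{\alpha-1}\to\infty$; this blows up for \emph{every} $\alpha<1$, not just for $\alpha\geq 1/2$. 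The condition $1-\alpha>\alpha$ does not emerge from the aspect-ratio computation you describe (note also that with $\epsilon_k\simeq\ell_k^{\alpha}$ the cells are tall and thin, so the horizontal dimension never dominates the vertical one). The paper's construction avoids exactly this trap by displacing \emph{horizontally}, $F(z)=z+u(x,y)$ with $u$ real-valued and constant in $x$ off the graph, so that the only derivative to control is $\partial u/\partial y$; the threshold $\alpha<1/2$ then comes from requiring $u$ to be Lipschitz (the parameter choice $N\leq M$ in the construction of Theorem~\ref{theorem2}) simultaneously with the H\"older condition on the graph.

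Second, the positive-area mechanism is asserted rather than constructed. Each $F_k$ is a homeomorphism, so every finite composition maps $\Gamma$ onto a curve of zero area; summing vertical displacements of size $\epsilon_k$ across scales produces a convergent total displacement, not a set of positive measure. Making a homeomorphic image of an arc acquire positive area (in the style of Bishop's examples of homeomorphisms conformal off a curve) requires a genuinely different and more delicate mechanism than accumulated shears, and nothing in your sketch supplies it. The paper sidesteps this entirely by using a cheaper certificate of non-quasiconformality: the function $u$ inherited from Theorem~\ref{theorem2} is ACL off $\Gamma$ but not globally ACL, so $F(z)=z+u(x,y)$ (a homeomorphism because $u$ is nondecreasing in $x$, with Beltrami coefficient bounded away from $1$ since $|\nabla u|$ is small after rescaling) is quasiconformal off $\Gamma$ yet fails the ACL property required of a global quasiconformal map. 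I would recommend either switching to a horizontal displacement and the ACL obstruction, or else supplying a complete positive-area construction, which is a substantially harder project.
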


We have not been able to construct a quasiconformally non removable graph for $\alpha\in[1/2, 2/3)$.

Theorem \ref{theorem3} has already been proven by R. Kaufman in
\cite{K86}. For a comparison of the two proofs and their limitations
see the paragraphs after the proof of theorem \ref{theorem3}.

For the sake of completeness we give a theorem of Peter W. Jones
(personal communication) which shows that for any Sobolev (or
quasiconformally) non-removable graph $\Gamma$ for $\alpha>1/2$ any
interesting function $F$ must satisfy $|\nabla F|\rightarrow \infty$
as $(x,y)\rightarrow \Gamma$.

\begin{theorem}\label{theorem4} If $F$ is a function which is ACL off a H\"older $\alpha$ graph $\Gamma$, $\alpha>1/2$ ,and satisfies $|\nabla F|\leq 1$, then $F$ is globally ACL.
\end{theorem}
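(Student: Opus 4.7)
My plan is to deduce Theorem~\ref{theorem4} directly from Theorem~\ref{theorem1}. The key point is that the threshold $\alpha > p/(2p-1)$ of Theorem~\ref{theorem1} converges down to $1/2$ as $p\to\infty$: solving for $p$, any H\"older exponent $\alpha > 1/2$ is covered by every $p > \alpha/(2\alpha-1)$. I would therefore fix such a finite $p$, for which Theorem~\ref{theorem1} asserts that $\Gamma$ is $W^{1,p}$-removable for continuous functions.

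The rest of the argument is to localize $F$ so that it fits the hypothesis of Theorem~\ref{theorem1}. Since the graph $\Gamma$ is compact, I would pick a cutoff $\chi\in C_c^\infty(\BBR^2)$ equal to $1$ in a neighborhood of $\Gamma$ and set $G := \chi F$. With $F$ taken continuous (the standing hypothesis in this circle of results), $G$ is a continuous, compactly supported function on $\BBR^2$, ACL on $\BBR^2\setminus \Gamma$ with classical gradient $\chi \nabla F + F\nabla \chi$. This gradient is bounded (because $|\nabla F|\le 1$ and $F$ is bounded on the compact set $\mathrm{supp}\,\chi$) and compactly supported, so $G \in L^p(\BBR^2) \cap W^{1,p}(\BBR^2\setminus \Gamma)$.

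Applying Theorem~\ref{theorem1} to $G$ then yields $G \in W^{1,p}(\BBR^2)$, and continuity of $G$ identifies it with its canonical ACL representative, so $G$ itself is ACL on $\BBR^2$. On every line that crosses $\Gamma$, $G$ agrees with $F$ on a neighborhood of the crossing, so $F$ is ACL along that line; on lines disjoint from $\Gamma$, ACL holds by hypothesis. Hence $F$ is globally ACL, as required.

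The genuine work is hidden entirely in Theorem~\ref{theorem1}; with that theorem in hand, Theorem~\ref{theorem4} is essentially just the observation that the exponent range in Theorem~\ref{theorem1} is unbounded, so the critical value of $\alpha$ one obtains by letting $p\to\infty$ is $1/2$. The only point of care is the implicit continuity of $F$: $|\nabla F|\le 1$ makes $F$ Lipschitz in the internal path metric of each of the two components of $\BBR^2\setminus \Gamma$, and one passes to the continuous representative by identifying the two one-sided extensions with $F|_\Gamma$; since $\Gamma$ has measure zero this modification is harmless for the ACL conclusion.
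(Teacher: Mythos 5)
Your reduction is correct, but it is a genuinely different route from the paper's. The paper proves Theorem~\ref{theorem4} directly and line by line, without invoking Theorem~\ref{theorem1}: it uses the Besicovitch--Ursell bound $Hdim(\Gamma)\leq 2-\alpha$ and slicing to get $Hdim(\Gamma\cap l)\leq 1-\alpha$ for almost every horizontal line $l$, replaces the ``bad'' intervals by a cover $\{(\tilde{x}_j,\tilde{y}_j)\}$ of $\Gamma\cap l$ with $\sum_j|\tilde{x}_j-\tilde{y}_j|^{1-\alpha+\tau}<1$, exploits that $|\nabla F|\leq 1$ together with the H\"older geometry of the graph forces $|F(\tilde{x}_j)-F(\tilde{y}_j)|\leq|\tilde{x}_j-\tilde{y}_j|^{\alpha}$ across $\Gamma$, and closes with H\"older's inequality with exponents chosen so that $\alpha=\frac{1-\alpha+\tau}{p}+\frac{1}{q}$; the hypothesis $\alpha>1/2$ enters precisely as $1-\alpha<\alpha$. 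Your observation that the threshold $p/(2p-1)$ in Theorem~\ref{theorem1} decreases to $1/2$, so that any $\alpha>1/2$ is covered by a finite $p>\alpha/(2\alpha-1)$, followed by the cutoff $G=\chi F$ and an appeal to removability, is legitimate and shorter given the rest of the paper, though it rests on the full Jones--Smirnov machinery rather than being self-contained (the paper presents Theorem~\ref{theorem4} as an independent result of P.~W.~Jones). To make your version airtight you should state explicitly the three routine facts you use: that ACL plus a bounded pointwise gradient on the open set $\BBR^2\setminus\Gamma$ gives $G\in W^{1,p}(\BBR^2\setminus\Gamma)$ with distributional derivatives equal to the pointwise ones; that a continuous function in $W^{1,p}(\BBR^2)$ agrees with its ACL representative on almost every line (two continuous functions on a line that agree a.e.\ agree everywhere), so $G$ itself is ACL; and that absolute continuity of $F$ on a compact subinterval of a line follows by gluing from the open cover by $\{\chi=1\}$ (where $F=G$) and $\Gamma^C$. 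The continuity of $F$, which you correctly flag as implicit, is equally implicit in the paper's proof, where it underlies the estimate $|F(\tilde{x}_j)-F(\tilde{y}_j)|\leq|\tilde{x}_j-\tilde{y}_j|^{\alpha}$. In exchange for its greater length, the paper's argument makes transparent why $1/2$ is the critical exponent for bounded-gradient functions (the H\"older exponent must beat the dimension of the slices $\Gamma\cap l$), while yours exhibits Theorem~\ref{theorem4} as a formal corollary of Theorem~\ref{theorem1}.
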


In the following section we prove the sufficiency result. The last
section is dedicated to the non-removability results and to theorem
\ref{theorem4}.

{\bf Acknowledgments}I would like to thank my advisor, Peter W. Jones, for proposing this problem and for the discussions about it.

\section{Proof of sufficiency}

In this section we prove Theorem \ref{theorem1}. We do this by applying a result of Jones and Smirnov (\cite{JS00}).

Consider a simply connected domain $\Omega$ with a marked point $z_0$ and it's Whitney decomposition $\{Q\}$. We denote by $Sh(Q)$ the shadow of the square $Q$ through the family of hyperbolic geodesics starting at $z_0$ and accumulating on the boundary of $\Omega$. In other words, $Sh(Q)$ is the set of accumulation points on $\partial \Omega$ of hyperbolic geodesics that start at $z_0$ and pass through $Q$. We denote by $s(Q)$ the euclidean diameter of $Sh(Q)$ and by $l(Q)$ the side length of the square $Q$. In this section and the next we will use $\approx$ and $\lesssim$ for equality and inequality respectively up to \textit{universal} constants. We will keep track of constants important for the computation separately.

Following  \cite{JS00} we have the following theorem:
\begin{theorem}\label{JonesSmirnov}
If for some $p > 1$ a simply connected domain $\Omega\subset \BBR^2$ satisfies
\begin{equation}
\sum_{Q} \left(\frac{s(Q)}{l(Q)}\right)^{p'}l(Q)^2 <\infty
\end{equation}
then $\partial \Omega$ is removable for continuous Sobolev $W^{1,p}$ functions ($1/p+1/p' =1$).
\end{theorem}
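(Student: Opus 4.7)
The plan is to verify the distributional integration-by-parts identity
\begin{equation*}
\int u\,\partial_j\psi\,dx + \int \psi\,\partial_j u\,dx = 0 \qquad (\psi\in C_c^\infty(\BBR^2),\ j=1,2),
\end{equation*}
where $\partial_j u$ denotes the a.e.\ gradient of $u$ on $\BBR^2\setminus\partial\Omega$. Combined with the continuity of $u$ (giving $u\in L^p_{\mathrm{loc}}$) and $\nabla u\in L^p(\BBR^2\setminus\partial\Omega)\subset L^p(\BBR^2)$, this identity yields $u\in W^{1,p}(\BBR^2)$. The identity already holds for $\psi$ supported off $\partial\Omega$ by the Sobolev hypothesis; the entire content is handling test functions straddling $\partial\Omega$. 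I would introduce smooth cutoffs $\chi_\epsilon\in C^\infty(\BBR^2)$ vanishing in an $\epsilon$-neighborhood of $\partial\Omega$ and equal to $1$ outside a $2\epsilon$-neighborhood, built compatibly from the Whitney decompositions on each side of $\partial\Omega$. Applying the valid Sobolev IBP to the legitimate test function $\chi_\epsilon\psi\in C_c^\infty(\BBR^2\setminus\partial\Omega)$ and sending $\epsilon\to 0$, dominated convergence reduces the problem to showing
\begin{equation*}
\lim_{\epsilon\to 0}\int u\,\psi\,\partial_j\chi_\epsilon\,dx = 0.
\end{equation*}

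The support of $\nabla\chi_\epsilon$ is a thin shell near $\partial\Omega$, covered by Whitney squares $Q$ with $l(Q)\approx\epsilon$. On each such $Q$ I would select a reference boundary point $\omega_Q\in Sh(Q)\cap\partial\Omega$ at the tip of a hyperbolic geodesic through $Q$, and decompose $u = u(\omega_Q) + \bigl(u-u(\omega_Q)\bigr)$. The constant piece on $Q$, paired with the analogous piece on the matching Whitney square across $\partial\Omega$ sharing the same reference $\omega_Q$, cancels to leading order because $u$ is single-valued on $\partial\Omega$ and $\partial_j\chi_\epsilon$ carries opposite signs on the two sides; the residual cross-boundary error is controlled by the oscillation of $\psi$ on a set of diameter $\approx s(Q)$. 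The fluctuation $u-u(\omega_Q)$ on $Q$ is estimated by a Poincar\'e chain along Whitney squares from $Q$ to $\omega_Q$, bounding $\|u-u(\omega_Q)\|_{L^\infty(Q)}$ by a sum of $\|\nabla u\|_{L^p(Q')}$ over chain squares $Q'$ with weights of the form $l(Q')^{1-2/p}$ for $p>2$; for $p\le 2$ the a priori continuity of $u$ is used to close the same sort of telescope.

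Summing the fluctuation contributions over all Whitney squares $Q$ in the $\epsilon$-shell and applying H\"older's inequality with exponents $p, p'$, the relevant weights organize into exactly $(s(Q)/l(Q))^{p'}l(Q)^2$: the factor $1/l(Q)$ comes from $|\nabla\chi_\epsilon|$, the factor $l(Q)^2$ from $|Q|$, and the factor $s(Q)$ from the tangential span of $\partial\Omega$ affected by $Q$. The resulting bound
\begin{equation*}
\left|\int u\,\psi\,\partial_j\chi_\epsilon\,dx\right|\lesssim \|\psi\|_\infty\,\|\nabla u\|_{L^p(\BBR^2\setminus\partial\Omega)}\left(\sum_{l(Q)\lesssim\epsilon}\Bigl(\tfrac{s(Q)}{l(Q)}\Bigr)^{p'}l(Q)^2\right)^{1/p'}
\end{equation*}
has a first factor that is the tail of the series assumed finite in the hypothesis, and therefore vanishes as $\epsilon\to 0$. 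The main obstacle I anticipate is making the cross-boundary pairing of Whitney squares rigorous when $\partial\Omega$ is genuinely fractal: the pairing must produce a quantitatively small error of scale $s(Q)$ (not merely $l(Q)$), and this is precisely where the geometric interplay between hyperbolic geodesics and their shadows enters in a decisive way, driving both the shape of the summability condition and the organization of the final H\"older estimate.
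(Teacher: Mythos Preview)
The paper does not prove this statement; Theorem~\ref{JonesSmirnov} is quoted from Jones and Smirnov \cite{JS00} and used as a black box in the proof of Theorem~\ref{theorem1}. There is therefore no ``paper's own proof'' to compare your proposal against.

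For what it is worth, your sketch is in the spirit of the argument in \cite{JS00}: cut off near $\partial\Omega$, reduce to controlling the cross term $\int u\,\psi\,\partial_j\chi_\epsilon$, subtract boundary values of $u$ along hyperbolic geodesics, telescope along Whitney chains, and close with H\"older to produce the $\sum_Q (s(Q)/l(Q))^{p'}l(Q)^2$ weight. The point you flag as the ``main obstacle'' --- pairing Whitney squares across $\partial\Omega$ so that the constant pieces cancel up to an $s(Q)$-scale error --- is indeed the delicate step, and in \cite{JS00} it is handled not by a literal pairing of squares on two sides but by showing that the boundary error integrates to zero because $u$ is continuous and single-valued on $\partial\Omega$, with the shadow diameters $s(Q)$ entering through the overlap of shadows rather than through matching of opposing cubes. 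Your heuristic accounting of the exponents is correct, but as written the proposal is a plausible outline rather than a proof; if you intend to include an actual argument you should consult \cite{JS00} directly.
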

\begin{remark} The sum is over the Whitney squares which are at some finite (euclidian) distance to the boundary. The precise distance is not important.
\end{remark}

\begin{proof}[Proof of theorem \ref{theorem1}]
We consider a H\"older h graph $\Gamma_f$ and the Whitney decomposition $\{Q\}$ of the complement of the graph. For each $n\in \BBZ, n \geq 0$ let $S_n$  denote the collection of Whitney squares $Q$ which are at height $\approx 2^{-n}$ above or below the graph $\Gamma_f$. Let $LR$ be the collection of squares which are to the left or right of the graph (the projection of any of these squares on the x-axis does not intersect $[0,1]$).

We then have trivially
\begin{equation*}
\sum_{Q\in LR} \left(\frac{s(Q)}{l(Q)}\right)^{p'}l(Q)^2 <\infty
\end{equation*}
and need only estimate the rest of the sum:
\begin{eqnarray*}
\sum_{Q\notin LR} \left(\frac{s(Q)}{l(Q)}\right)^{p'}l(Q)^2  = \sum_n\sum_{S_n} \left(\frac{s(Q)}{l(Q)}\right)^{p'}l(Q)^2
\end{eqnarray*}
For each of the squares in $S_n$ the following properties hold:
\begin{itemize}
\item[(a)] $l(Q) \approx h^{-1}(2^{-n})$
\item[(b)] $s(Q) \approx 2^{-n}$
\end{itemize}
Between height $2^{-n}$ and $2^{-n+1}$ there are at most $\approx 2^{-n}/h^{-1}(2^{-n})$ such squares vertically and at most $\approx 1/h^{-1}(2^{-n})$ horizontally.

All these imply
\begin{eqnarray*}
\sum_n\sum_{S_n} \left(\frac{s(Q)}{l(Q)}\right)^{p'}l(Q)^2 &\lesssim& \sum_n \left(\frac{2^{-n}}{h^{-1}(2^{-n})}\right)^{p'} \left(h^{-1}(2^{-n})\right)^2\frac{2^{-n}}{\left(h^{-1}(2^{-n})\right)^2} \\
&\lesssim& \sum_n \left(\frac{2^{-n}}{h^{-1}(2^{-n})}\right)^{p'} 2^{-n}\\
&\lesssim& \int_0^1 \left(\frac{t}{h^{-1}(t)}\right)^{p'} dt < \infty
\end{eqnarray*}
By theorem \ref{JonesSmirnov} we obtain the desired conclusion.
\end{proof}

\section{Examples of non-removable graphs}

In this section we construct non-removable graphs. We will follow the philosophy of \cite{K84}. Compare also with \cite{K86}.

\begin{proof}[Proof of theorem \ref{theorem2}]
We construct a map $u:\BBR^2 \rightarrow \BBR$ which has the following properties.
\begin{itemize}
\item[(a)] it is not identically 0 and it nonnegative
\item[(b)] it is continuous
\item[(c)] it has partial derivatives almost everywhere and $\frac{\partial u}{\partial x} = 0$ a.e.(off a graph $\Gamma$).
\item[(d)] it is absolutely continuous on almost every line off $\Gamma$, but not globally ACL.
\item[(e)] $\nabla u \in L^p$ and  $\alpha< \frac{p}{2p-1}$.
\end{itemize}

The function $u$ above will be the limit of a sequence of functions $u_n$, each of which has support in $[0,1]^2$, is continuous and is differentiable almost everywhere. It will be given by $u_n(x,y) = \int_0^x A_n(t,y)dt$ for some function $A_n$. 

Together with $u_n$ we also construct a sequence of sets $\{\Gamma_n\}$ with $\cap_n\Gamma_n = \Gamma$. The function $A_n$ above will be supported on $\Gamma_n$.

The first step of the procedure is to consider $A_1:[0,1]^2\rightarrow \BBR$ which is constant in the x-variable and $A_1(x,0) = A_1(x,1) = 0, \forall x\in [0,1]$. We normalize it to have a maximum 1. Pick two large even numbers $M$ and  $N$ (how we pick these will become clear later).

Assume we are now at level $n$. The function $A_n$ is supported on a finite collection of rectangles of sides $l_n, \tilde{l}_n$ respectively. $\Gamma_n$ is the union of these rectangles. Inside each rectangle, $A_n$ is constant as a function of the $x$-variable. Consider one of these rectangles, $R$.

We divide the $y$ side of $R$ in $N$ pieces and the $x$ side in
$2(N-1)M$ pieces. What we get is a collection of small rectangles.
In this mesh we mark as black {\it some} rectangles of sides
$l_{n+1} = \frac{l_n}{2(N-1)M}$ and $\tilde{l}_{n+1} =
\frac{2\tilde{l}_n}{N}$. Each of these black rectangles will be made
of two small rectangles (basically two small ones, one on top of the
other). On each column there will be only one black rectangle. Every
second column will have no black rectangle in it. Finally, the black
rectangles will be placed in the pattern $\Lambda \Lambda \ldots
\Lambda$ (see Figure).

If we denote by $I_{i}$ the projections on the $y$-axis of the black rectangles we should have $\cup_i I_{i} = $ the whole $y$-side of $R$. In addition $I_i \cap I_{i+1} \neq \emptyset $.

The finite  collection $\{I_i\}_{1,N-1}$ is a cover of the $y$-side of $R$ so we can find a partition of unity $\phi_i$ subordinated to this cover such that $\sum_i \phi_i(y) = 1$ (this is easy, everything is finite). In fact at most two functions will be non zero for any $y$.

Now define $A_{n+1}$ to be zero whenever $A_n$ is zero. Set $A_{n+1}(x,y) = 0 $ for $(x,y)$ outside of the black rectangles. In each black rectangle the function $A_{n+1}$ is constant as a function of $x$. We define it such that:
\begin{eqnarray}
A_{n+1}(x,y) l_{n+1} M = \phi_{n+1,i}(y) A_n(x,y)l_n \label{DefAn}
\end{eqnarray}
if $(x,y)$ is in a black rectangle with projection $I_i$ on the $y$-axis.
This will ensure that $A_{n+1}$ is supported on the black rectangles.

One can think of $u_{n+1}$ as describing a measure on each line. In this view $A_{n+1}$ is essentially the 'mass' density of $u_{n+1}$. Condition (\ref{DefAn}) ensures that the mass on each line is preserved when going from level $n$ to $n+1$. More rigorously, for each $y$:
\begin{eqnarray}
\int_{x_{R,in}}^{x_{R,out}}A_n(t,y)dt &=& \int_{x_{R,in}}^{x_{R,out}} A_{n+1}(t,y)dt  \\
&=& \sum_{B-\mbox{black rect}} \int_{x_{B,in}}^{x_{B,out}}A_{n+1}(t,y)dt
\end{eqnarray}
Here $x_{R, in}, x_{R, out}$ are the first and the last $x$ coordinates for which $(x,y)$ is in rectangle $R$.

$\Gamma_{n+1}$ is defined as the union of all black rectangles from all rectangles $R\subset \Gamma_n$.

\includegraphics{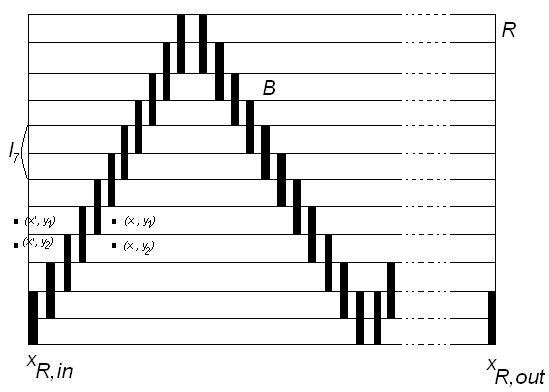}

Since  $u_n(x,y) = \int_0^x A_n(t,y)dt$ we have $\frac{\partial u_n}{\partial x} = 0$ outside $\Gamma_n$. In addition, $u_{n+i}(x,y) = u_n(x,y)$ for all $(x,y)\notin \Gamma_n, \forall i\geq1$.

We also have the relation:

\begin{eqnarray}
|u_{n+1}(x,y)-u_n(x,y)| = |\int_0^x A_{n+1}(t,y)-A_n(t,y)dt|
\end{eqnarray}

Because the function $u_{n+1}$ is no different than $u_n$ outside $\Gamma_n$, the supremum of these quantities is reached when $x\in[x_{B,in},x_{B,out}]$, where $B\subset \Gamma_n$ is the black rectangle for which $(x,y)\in B$. We then have:
\begin{eqnarray}
|u_{n+1}(x,y)-u_n(x,y)| \lesssim \frac{A_n(x,y)l_n}{M} 
\end{eqnarray}
By the defining relation of $A_n$ , (\ref{DefAn}), we get:
\begin{eqnarray}
\sup_{x,y} A_n(x,y) &\lesssim& \frac{1}{M^{n}l_n}
\end{eqnarray}
and
\begin{equation*}
\frac{\partial A_{n+1}(x,y)}{\partial y} l_{n+1} M  = \frac{\partial \phi_{n+1,i}(y)}{\partial y} A_n(x,y)l_n + \phi_{n+1,i}(y) \frac{\partial A_{n}(x,y)}{\partial y} l_n \nonumber
\end{equation*}
We keep in mind that $\sup |\phi_{n+1,i}|\leq 1$ and its support is
of length $\tilde{l}_{n+1}$ and get
$\sup|\partial\phi_{n+1,i}/\partial y|\lesssim
\frac{1}{\tilde{l}_{n+1}}$. Then we compute recursively and get:
\begin{equation}
\sup_{x,y}|\frac{\partial A_{n}(x,y)}{\partial y}| \lesssim \frac{1}{ M^{n} l_{n}\tilde{l}_n}\label{derAn}
\end{equation}
We may thus conclude:
\begin{eqnarray}
|u_{n+1}(x,y)-u_n(x,y)| \lesssim \frac{1}{M^{n+1}} 
\end{eqnarray}
Since $M>1$ $u_n\rightarrow u$ uniformly. $u$ will thus be
continuous. In addition, $u(x,y) = u_n(x,y)$ for $(x,y)\notin
\Gamma_n$ so u satisfies all the properties we set out to fulfill,
except the last one.

We will now estimate $b_{n+1} = \sup |\frac{\partial
u_{n+1}(x,y)}{\partial y}|$  for $(x,y)\in
\Gamma_n\setminus\Gamma_{n+1}$. Consider $(x,y_1),(x,y_2)$ which
satisfy these conditions and pick $x'$ such that $(x',y_1),
(x',y_2)$ lie just outside and to the left of the black rectangle of
level $n$ in which we picked $(x,y_1),(x,y_2)$. Then
\begin{eqnarray}
|u_{n+1}(x,y_1) - u_{n+1}(x,y_2)|&\leq& |u_{n+1}(x',y_1) - u_{n+1}(x',y_2)| +\nonumber \\
&+&|u_{n+1}(x,y_1) - u_{n+1}(x',y_1) + u_{n+1}(x',y_2)- u_{n+1}(x,y_2)| \nonumber\\
&\leq& |u_{n}(x',y_1) - u_{n}(x',y_2)| + \nonumber \\
&+& |\int_{x'}^x A_{n+1}(t,y_1)dt - \int_{x'}^x A_{n+1}(t,y_2)dt| \nonumber \\
&\leq&  b_n |y_2-y_1| +  |\int_{x'}^x A_{n+1}(t,y_1) - A_{n+1}(t,y_2)dt| \nonumber \\
&\leq&b_n |y_2-y_1| +  \int_{x'}^x \sup|\frac{\partial A_{n+1}(t,y)}{\partial y}| |y_2-y_1|dt \nonumber \\
&\leq&b_n |y_2-y_1| + \frac{C|y_2-y_1|M l_{n+1}}{M^{n+1} l_{n+1}\tilde{l}_{n+1}} \nonumber \\
&\leq&\left( b_n + \frac{C}{M^{n}\tilde{l}_{n+1}}\right)|y_2-y_1|
\end{eqnarray}
We may now write
\begin{equation*}
b_{n+1}\leq b_n +  \frac{C}{M^{n}\tilde{l}_{n+1}}
\end{equation*}
$C$ above denotes a universal constant!
Now, we have $l_n < \frac{1}{N^nM^n}, \tilde{l}_n = \frac{2^n}{N^n}$ so we have
\begin{equation*}
b_{n+1}\lesssim \sum_{i=2}^{n+1}  \frac{N^i}{2^{i}M^{i-1}} \lesssim N\sum_2^{n+1} \frac{N^{i-1}}{M^{i-1}}\lesssim \frac{N}{N/M - 1} \frac{N^{n+1}}{M^{n+1}}
\end{equation*}
if $N> M$. In case $N\leq M$ we have $b_{n+1}\lesssim N$.

We define $\Gamma = \cap_n \Gamma_n$. It is easy to see this is the graph of a function.

Keeping in mind that $|\frac{\partial u}{\partial y}| \leq b_n $ for $(x,y)\in \Gamma_n\setminus \Gamma_{n-1}$ we can write:
\begin{eqnarray}
\int_{\Gamma^C}|\nabla u|^p dxdy &\lesssim& \sum_n M^nN^nl_n\tilde{l}_n b_n^p \lesssim C(N,M)\sum_n \frac{2^{n}}{N^n} \frac{N^{np}}{M^{np}} \nonumber \\
&\lesssim& C(N,M) \sum_n \left(\frac{2N^{p-1}}{M^p}\right)^n
\end{eqnarray}
So the $L^p$ integral of $\nabla u$ is going to be finite if and only if
\begin{equation}\label{condition1}
\frac{2N^{p-1}}{M^p} <1
\end{equation}
The constant $C(N,M)$ is finite for all $N, M$. If $N \leq M$ the
integral is trivially finite. In this case we actually obtain a
Lipschitz function $u$. We will come back to this later.

We have up to this point constructed a function $u$ which satisfies all the properties we wanted. This function is ACL on $\Gamma^C$, where $\Gamma$ is the graph of a function. To ensure this is a H\"older $\alpha$ graph we should have:
\begin{equation}
\tilde{l}_n\leq \left(\frac{l_n}{M}\right)^\alpha \forall n >0 \label{condition2}
\end{equation}
We write $M=2^a, N=2^b$. $M$ is actually a constant (large, but nevertheless a constant) so we can disregard it above. Conditions (\ref{condition1}) and (\ref{condition2}) become:
\begin{eqnarray*}
b(p-1)+1 &<& ap \\
1+\alpha + a\alpha  &\leq& b(1-\alpha)
\end{eqnarray*}
The second relation becomes:
\begin{eqnarray*}
\alpha(1+a+b) &\leq& b-1 \Rightarrow \alpha \leq \frac{b-1}{1+a+b}< \frac{b-1}{1+b+\frac{b(p-1)+1}{p}} \\
&<& \frac{p(b-1)}{p+b(2p-1)+1} \rightarrow \frac{p}{2p-1} \mbox{ as }b\rightarrow \infty.
\end{eqnarray*}
We thus see that if we pick $a,b$ large enough $\Gamma$ will be a H\"older $\alpha$ graph. And this is possible for all $\alpha < \frac{p}{2p-1}$.
\end{proof}

\begin{proof}[Proof of theorem \ref{theorem3}]
We have $p=2$. For $\alpha <1/2$ one can take $N < M$ above which
implies $|\nabla u|< C$. Consider the function $F:\BBC\rightarrow
\BBC, F(z) = z + u(x,y)$. This function maps horizontal lines to
horizontal lines and on each one of them the function is strictly
increasing. Hence it is a homeomorphism. Since  $|\nabla u|< C$ the
Beltrami differential has norm $<k<1$ and so $F$ is quasiconformal
off $\Gamma$. On the other hand $F$ is not globally ACL (because $u$
isn't), hence not globally quasiconformal.
\end{proof}

We have not been able to construct a quasiconformally non removable
graph for $\alpha\in[1/2, 2/3)$. The trouble is that one is forced
to pick $N\geq M$ and that leads to a function $u$ with Beltrami
coefficient $\mu_F$ which satisfies $||\mu_F||_{\infty} = 1$.  We
hoped we could solve the Beltrami equation for $\mu_F$ and correct
$F$, but our attempts were not successful, neither using the
generalization to the Measurable Riemann Mapping theorem for
exponential distortion, nor using Lehto's theorem (\cite{AIM09},
chapter 20). In our model it is possible to make $F$ a
homeomorphism, but very difficult to make it quasiconformal off the
graph.

In his proof, R. Kaufman defined a function $F = Az + \int
\frac{1}{z-w}d\mu(w)$ for some constant $A$ and measure $\mu$
supported on a graph. The gradient of the Cauchy transform turned
out to be bounded off $\Gamma$ which meant that for large $A$ $F$
was a homeomorphism. For $\alpha \geq 1/2$ any interesting function
can't be Lipschitz anymore (see below) so the Cauchy transform can't
be Lipschitz which in turn means that most likely $F$ will not be a
homeomorphism.

\begin{proof}[Proof of theorem \ref{theorem4}]
It was proven by A. S. Besicovitch and H. D. Ursell (\cite{BU}) that
\begin{equation*}
Hdim(\Gamma)\leq 2-\alpha
\end{equation*}
An easy consequence is that for almost every horizontal line $l$:
\begin{equation} \label{HdimGammaLine}
Hdim(\Gamma\cap l)\leq 1-\alpha
\end{equation}
(See e.g. \cite{M95}, theorem 7.7) Consider one such line $l$ and on
which $F|_{\Gamma^C}$ is absolutely continuous. Let $\epsilon > 0$.
We want $\delta$ such that if $\sum_i|x_i-y_i|<\delta$ then $\sum_i
|F(x_i)-F(y_i)|<\epsilon$. We have two types of intervals
$(x_i,y_i)$ (on line $l$): some which don't contain any point of the
graph $\Gamma$ (we call them 'good') and some which do (we call them
'bad').

Without loss of generality we may consider only the case when the
bad intervals are an open cover of $\Gamma \cap l$. Since $\alpha >
1/2$ there exists $\tau > 0$ such that $2\alpha > 1+\tau$.
 By (\ref{HdimGammaLine}) we may pick an open cover $\{(\tilde{x}_j,\tilde{y}_j)\}$ of $\Gamma\cap l$
 such that $|\tilde{x}_j-\tilde{y}_j|\leq \inf_i|x_i-y_i|$  (this is actually a minimum) and such that
\begin{equation*}
\sum_{j}|\tilde{x}_j-\tilde{y}_j|^{1-\alpha+\tau}  < 1
\end{equation*}
We also must have for each $j: (\tilde{x}_j,\tilde{y}_j)\subset
(x_i,y_i)$ for some $i$ and
\begin{equation*}
\sum_j |\tilde{x}_j-\tilde{y}_j|\leq \sum_{bad}|x_i-y_i| <\delta
\end{equation*}
Finally, we also have that the intervals composing
$(x_i,y_i)\setminus \cup_j (\tilde{x}_j,\tilde{y}_j)$ are actually
in the complement of $\l\cap \Gamma$ so we can lump them with the
good intervals. We can redefine the "bad" collection of intervals to
be $\{(\tilde{x}_j, \tilde{y}_j)\}$. The collection of all the
intervals "good" and "bad" is a refinement of the one we started
with.

Since $\Gamma$ is a H\"older $\alpha$ graph and  $|\nabla F| \leq 1$
we get $|F(\tilde{x}_j)-F(\tilde{y}_j)| \leq
|\tilde{x}_j-\tilde{y}_j|^{\alpha}$. By the definition of $\tau$ we
have $p = \frac{\alpha-\tau}{1-\alpha} > 1$. Take $q$ to be its
conjugate and set $\alpha_1 = \frac{1-\alpha+\tau}{p}, \alpha_2 =
\frac{1}{q}$. Then $\alpha  = \alpha_1+\alpha_2$. We may now write:
\begin{eqnarray}
\sum_{j} |F(\tilde{x}_j)-F(\tilde{y}_j)|&\leq& \sum_{j}|\tilde{x}_j-\tilde{y}_j|^{\alpha}\leq\left(\sum_{j}|\tilde{x}_j-\tilde{y}_j|^{p\alpha_1}\right)^{1/p}\left(\sum_{j}|\tilde{x}_j-\tilde{y}_j|^{q\alpha_2}\right)^{1/q}\nonumber \\
&<&
\left(\sum_{j}|\tilde{x}_j-\tilde{y}_j|^{1-\alpha+\tau}\right)^{1/p}\left(\sum_{j}|\tilde{x}_j-\tilde{y}_j|\right)^{1/q}\leq
\delta^{1/q}\label{relation1}
\end{eqnarray}

Since the good intervals are in the complement of $\Gamma$ we use
$|\nabla F| \leq 1$ to get
\begin{equation*}
\sum_{good} |F(x_i)-F(y_i)|\leq \sum_{good}|x_i-y_i| < \delta
\end{equation*}
We are done once we pick $\delta$ small enough.
\end{proof}

\end{document}